\newtheorem{theorem}{Theorem}[section]
\newtheorem{lemma}[theorem]{Lemma}
\newtheorem{proposition}[theorem]{Proposition}
\newtheorem{corollary}[theorem]{Corollary}
\theoremstyle{definition}
\newtheorem{pro}[theorem]{Question}
\theoremstyle{remark}
\newtheorem{remark}[theorem]{Remark}
\numberwithin{equation}{section}
\def\fnote#1{\footnote}
\def\ignora#1{}
\def\n3#1{\left\vert  \! \left\vert \! \left\vert \, #1 \, \right\vert \!
  \right\vert \! \right\vert }
\begin{document}

\title{ Some results on almost square Banach spaces }

\author{Julio Becerra Guerrero, Gin{\'e}s L{\'o}pez-P{\'e}rez and Abraham Rueda Zoca}
\address{Universidad de Granada, Facultad de Ciencias.
Departamento de An\'{a}lisis Matem\'{a}tico, 18071-Granada
(Spain)} \email{glopezp@ugr.es, juliobg@ugr.es, arz0001@correo.ugr.es}

\maketitle\markboth{J. Becerra, G. L\'{o}pez and A. Rueda}{Some results on almost square Banach spaces}

\begin{abstract}
We study almost square Banach spaces under a topological point of view. Indeed, we prove that the class of Banach spaces which admits an equivalent norm to be ASQ is that of  those Banach spaces which contain an isomorphic copy of $c_0$. We also prove that the symmetric projective tensor products of an almost square Banach space have the strong diameter two property.
\end{abstract}

\section{Introduction}

\bigskip
\par

The study of the size of  slices,  non-empty relatively weakly open and convex combination of slices of the unit ball of a Banach space has emerged in the last few years. A Banach space $X$ is said to have the slice diameter two property (respectively diameter two property, strong diameter two property) if every slice (respectively non-empty relatively weakly open subset, convex combination of slices of the unit ball) has diameter two. Such properties, which have been proved to be different in an extreme way \cite{blr2}, have show to have strong links with other properties of the geometry of a Banach space such as having octahedral norms \cite{blr4}. Under this frame, a new class of Banach spaces have recently appeared: the so-called \textit{almost square Banach spaces}.

 According to \cite{all}, a Banach space  $X$ is said to be 
\begin{enumerate}
\item \textit{locally almost square (LASQ)} if for every $x\in S_X$ there exists a sequence $\{y_n\}$ in $B_X$ such that $\Vert x\pm y_n\Vert\rightarrow 1$ and $\Vert y_n\Vert\rightarrow 1$.

\item \textit{weakly almost square (WASQ)} if for every $x\in S_X$ there exists a sequence $\{y_n\}$ in $B_X$ such that $\Vert x\pm y_n\Vert\rightarrow 1$, $\Vert y_n\Vert\rightarrow 1$ and $\{y_n\}\rightarrow 0$ weakly.

\item \textit{almost square (ASQ)} if for every $x_1,\ldots, x_n$ elements of $S_X$ there exists a sequence $\{y_n\}$ in $S_X$ such that $\Vert y_n\Vert\rightarrow 1$ and $\Vert x_i\pm y_n\Vert\rightarrow 1$ for every $i\in\{1,\ldots, n\}$.

\end{enumerate} 

On the one hand it is obvious that WASQ Banach spaces are LASQ Banach spaces, and it is also known that ASQ Banach spaces are WASQ Banach spaces \cite[Theorem 2.8]{all}. On the other hand there are several examples of Banach spaces which are ASQ as $c_0$ \cite[Example 3.1]{all}, the Hagler space $JH$ \cite[Lemma 3.1]{blr}, non-reflexive $M$-embedded Banach spaces \cite[Corollary 4.3]{all}...

In \cite{all} it is pointed out the nice relation between almost square Banach spaces and diameter two properties. Indeed, LASQ (respectively WASQ, ASQ) Banach spaces enjoy to have the slice diameter two property (respectively the diameter two property, strong diameter two property). However, in such paper not only do the autors study almost square Banach spaces from a geometrical point of view but also from an isomorphic one.

Indeed, every ASQ Banach space contains an isomorphic copy of $c_0$ \cite[Lemma 2.6]{all}. In addition, as the property of being an ASQ Banach space is preserved under taking $\ell_\infty$ sums with any other Banach space, Banach spaces containing a complemented copy of $c_0$ can be equivalently renormed to be ASQ. Consequently, a separable Banach space $X$ can be equivalently renormed to be ASQ if, and only if, $X$ contains an isomorphic copy of $c_0$ \cite[Corollary 2.10]{all}. Moreover, the authors ask whether the hypothesis of separability can be eliminated.

The main aim of this note is to provide a positive answer to the above question, proving that every Banach space containing an isomorphic copy of $c_0$ can be equivalently renormed to be ASQ. To this aim, in section 2, we will firstly renorm the Banach space $\ell_\infty$ to be ASQ. Then we will prove that every Banach space containig an isomorphic copy of $c_0$ can be equivalently renormed to be ASQ by giving a suitable renorming of the bidual space. Section 3 will be devoted to analyse the the relations between ASQ Banach spaces and the strong diameter two property in symmetric projective tensor products. In the last few years several papers have appeared related to diameter two properties in tensor products spaces (see \cite{ab,abr,blr3}) and, even though it has been proved stability results of the slice diameter two property and strong diameter two property in projective tensor products of Banach spaces (see \cite{blr3}), such results seem to be unknown for symmetric tensor products. So, in Section 3, we shall prove that the symmetric projective tensor products of ASQ Banach spaces have the strong diameter two property. Finally, Section 4 we will be devoted to exhibit open problems related to ASQ spaces.

We shall now introduce some notation. We will consider real Banach
spaces. $B_X$, respectively $S_X$,  stands for the closed unit ball, respectively the
unit sphere, of the Banach space $X$. We denote by $X^*$ the
topological dual space of $X$. Given $I$ a non-empty set and $\mathcal U$ an ultrafilter on $I$, recall that $\mathcal U$ is said to be \textit{principal} if there exists $i\in I$ such that $\mathcal U:=\{Y\subseteq I\ /\ i\in Y\}$. In other case, $\mathcal U$ is said to be \textit{non-principal} (see \cite{wil} for background). Given $f:I\longrightarrow \mathbb R$ a bounded function we will denote by
$$\lim\limits_{\mathcal U} f$$
the limit of $f$ under the ultrafilter $\mathcal U$. It is well known that in the particular case $I=\mathbb N$, there are non-principal ultrafilters. Moreover, given $\mathcal U$ a non-principal ultrafilter it follows that
$$\lim\limits_{\mathcal U} x=\lim\limits_{n\rightarrow \infty} x(n)$$
for every convergent sequence $x$.

Finally, according to \cite{ab}, for a Banach space $X$ and $N\in\mathbb N$, we will denote by $\widehat{\otimes}_{\pi,s,N}X$ the \textit{symmetric projective N-tensor product of $X$}. This space is the completion of the linear space generated by $\left\{x^N:=\mathop{x\otimes\ldots\otimes x}\limits^{N}: x\in X\right\} $ under the norm given by
$$\Vert z\Vert:=\inf \left\{\sum_{i=1}^k\vert\lambda_i\vert  :z=\sum_{i=1}^k \lambda_i x_i^N, \lambda_i\in\mathbb R, x_i\in S_X \ \forall i\in\{1,\ldots, k\}\right\}.$$
It is well known that its topological dual space is identified with the space of all $N$-homogeneous and bounded polynomials on $X$ by the action
$$P\left(x^N\right):=P(x)\ \ \forall x\in X$$
for each $N$-homogeneous and bounded polynomial $P$ (see \cite{flo} for background). We will denote by $\mathcal P(^N X,Y)$ the space of $N$-homogeneous polynomials from $X$ to $Y$.

\section{A renorming theorem for ASQ Banach spaces}\label{secisomorfos}

It is clear that each Banach space containing a complemented copy of $c_0$ can be equivalently renormed to be ASQ. Since $c_0$ is not complemented in $\ell_\infty$ \cite[Theorem 5.15]{rusos} it seems natural to wonder whether $\ell_\infty$, which is not an ASQ space, can be equivalently renormed to be ASQ.

In order to exhibit such a norm on $\ell_\infty$, consider $\mathcal U$ a non-principal ultrafilter on $\mathbb N$ and define, given $x\in \ell_\infty$, the following
$$\lim(x):=\lim_\mathcal U x.$$
Then $\lim:\ell_\infty\longrightarrow \mathbb R$ is linear and continuous. In fact, it is easy to prove that $\Vert \lim\Vert=1$. 

Now we are ready to prove the following

\begin{theorem}\label{renormainfi}
There exists an equivalent norm on $\ell_\infty$, say $|||\cdot|||$, such that the Banach space $(\ell_\infty,|||\cdot|||)$ is an ASQ Banach space.
\end{theorem}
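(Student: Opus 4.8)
The plan is to produce an explicit equivalent norm on $\ell_\infty$ built from the functional $\lim = \lim_{\mathcal U}$ introduced above. Write $\mathbf 1 := (1,1,1,\dots) \in \ell_\infty$, so that $\lim(\mathbf 1) = 1$, and let $P\colon \ell_\infty \to \ell_\infty$ be the linear projection $Px := x - \lim(x)\,\mathbf 1$ of $\ell_\infty$ onto the hyperplane $Y := \ker(\lim)$ along $\mathbf 1$. I would then define
\[
|||x||| := \max\bigl\{\,\Vert Px\Vert_\infty,\ |\lim(x)|\,\bigr\}.
\]
The first, routine, step is to check that $|||\cdot|||$ is indeed a norm (it is a maximum of two seminorms, and $|||x|||=0$ forces $\lim(x)=0$, hence $x = Px = 0$) and that it is equivalent to $\Vert\cdot\Vert_\infty$: since $\Vert\lim\Vert=1$ one gets $\Vert Px\Vert_\infty \le \Vert x\Vert_\infty + |\lim(x)| \le 2\Vert x\Vert_\infty$ and $|\lim(x)| \le \Vert x\Vert_\infty$, so $|||x||| \le 2\Vert x\Vert_\infty$, while $\Vert x\Vert_\infty \le \Vert Px\Vert_\infty + |\lim(x)| \le 2\,|||x|||$. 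Conceptually, $(\ell_\infty,|||\cdot|||)$ is nothing but the $\ell_\infty$-sum $Y \oplus_\infty \mathbb R$ under the decomposition $x = Px + \lim(x)\,\mathbf 1$, which is precisely why the constant-sequence direction will not obstruct the almost-square behaviour.

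The heart of the argument is to verify the ASQ condition for $(\ell_\infty,|||\cdot|||)$ directly. Fix $x_1,\dots,x_n$ in its unit sphere, so $\Vert Px_i\Vert_\infty \le 1$ and $|\lim(x_i)| \le 1$ for every $i$. Since $\lim_{\mathcal U}(Px_i) = 0$, for each $k \in \mathbb N$ the set $\{m \in \mathbb N : |Px_i(m)| < 1/k\}$ belongs to $\mathcal U$; intersecting these finitely many sets (and, if desired, the cofinite set $\{m \ge k\}$, which also lies in $\mathcal U$ as $\mathcal U$ is non-principal) produces a set in $\mathcal U$, hence non-empty, from which I choose $m_k$. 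Set $y_k := e_{m_k}$, the $m_k$-th canonical vector of $\ell_\infty$. Because $e_{m_k}$ is a null sequence, $\lim(e_{m_k}) = 0$ and $P e_{m_k} = e_{m_k}$, so $|||y_k||| = \max\{1,0\} = 1$, i.e.\ $y_k$ lies in the unit sphere of $(\ell_\infty,|||\cdot|||)$. Now $P(x_i \pm y_k) = Px_i \pm e_{m_k}$ and $\lim(x_i \pm y_k) = \lim(x_i)$, whence
\[
|||x_i \pm y_k||| = \max\bigl\{\,\Vert Px_i \pm e_{m_k}\Vert_\infty,\ |\lim(x_i)|\,\bigr\}.
\]
A look at the $m_k$-th coordinate gives $\Vert Px_i \pm e_{m_k}\Vert_\infty \ge |Px_i(m_k) \pm 1| > 1 - 1/k$, while every coordinate of $Px_i \pm e_{m_k}$ has modulus at most $\max\{1,\,|Px_i(m_k)| + 1\} \le 1 + 1/k$; hence $\Vert Px_i \pm e_{m_k}\Vert_\infty \to 1$, and since $|\lim(x_i)| \le 1$ the maximum above tends to $1$ as well. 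This exhibits the sequence required by the ASQ definition and shows that $(\ell_\infty,|||\cdot|||)$ is ASQ.

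I do not anticipate a genuine technical difficulty; the whole point is to guess the right renorming, and two features must be arranged correctly. First, one must use the $\ell_\infty$-sum $Y \oplus_\infty \mathbb R$: replacing $\max$ by a sum would still give an equivalent norm, but then the term $|\lim(x_i)|$ would survive and prevent $|||x_i \pm y_k|||$ from converging to $1$. Second, the witnessing unit vectors have to be far-out canonical vectors $e_{m_k}$, with indices chosen through the ultrafilter precisely at coordinates where all of $Px_1,\dots,Px_n$ are simultaneously near $0$ — this ultrafilter selection plays the role that the vanishing of tails plays in the classical proof that $c_0$ is ASQ. Equivalently, the argument can be organised as: $Y = \ker(\lim)$ equipped with $\Vert\cdot\Vert_\infty$ is itself ASQ (by the $e_{m_k}$ construction above), and then $(\ell_\infty,|||\cdot|||) = Y \oplus_\infty \mathbb R$ is ASQ because the ASQ property is stable under $\ell_\infty$-sums with an arbitrary Banach space.
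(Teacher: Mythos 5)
Your proof is correct and is essentially the paper's own argument: your norm $\max\{\Vert Px\Vert_\infty,|\lim(x)|\}$ is literally the paper's $\max\{|\lim(x)|,\sup_n|x(n)-\lim(x)|\}$, and your witnesses are the same ultrafilter-selected canonical vectors $e_{m}$ at coordinates where all $x_i-\lim(x_i)\mathbf 1$ are small. The only cosmetic differences are the $\ker(\lim)\oplus_\infty\mathbb R$ packaging and that you verify the sequential definition of ASQ directly, where the paper instead invokes the finite-$\varepsilon$ characterization from \cite[Proposition 2.1]{all}.
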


\begin{proof}
Consider on $\ell_\infty$ the norm given by
$$|||x|||:=\max\left\{\vert \lim(x)\vert,\sup_n\vert x(n)-\lim(x)\vert\right\},$$
Let us prove that the norm defined above is equivalent to the classical one on $\ell_\infty$. To this aim consider $x\in \ell_\infty$. Now, on the one hand
$$||| x|||\leq \max\left\{\vert \lim(x)\vert,\sup_n \vert x(n)\vert+\vert \lim(x)\vert\right\}\leq \Vert x\Vert_\infty+\Vert x\Vert_\infty=2\Vert x\Vert_\infty,$$
as $\Vert \lim\Vert=1$.
On the other hand
$$|||x|||\geq \sup_n \vert x(n)-\lim(x)\vert\geq \sup_n\vert x(n)\vert-\vert\lim(x)\vert.$$
Now
$$\Vert x\Vert_\infty\leq ||| x|||+\vert\lim(x)\vert\leq \vert \lim(x)\vert+\sup_n\vert x(n)-\lim(x)\vert+\vert \lim(x)\vert$$
$$\leq 2(\vert \lim(x)\vert+\sup_n\vert x(n)-\lim(x)\vert)\leq 4 \max\left\{\vert\lim(x)\vert,\sup_n\vert x(n)-\lim(x)\vert\right\}=4|||x|||. $$
So $|||\cdot|||$ and $\Vert\cdot\Vert_\infty$ are equivalent norms. Let us now prove that $(\ell_\infty,|||\cdot|||)$ is an ASQ Banach space. To this aim pick $x_1,\ldots, x_n\in S_{\ell_\infty}$ and $\varepsilon>0$.
Given $i\in\{1,\ldots, n\}$ consider the sets
$$A_i:=\{n\in\mathbb N\ /\ \vert x_i(n)-\lim x_i\vert<\varepsilon\}.$$
Then $A_1,\ldots, A_n\in\mathcal U$ by the definition of the limit by ultrafilter, so $A:=\bigcap\limits_{i=1}^n A_i\in\mathcal U$ as a finite intersection of elements of $\mathcal U$. Since $\mathcal U$ is an ultrafilter we have that $A\neq \emptyset$, so pick $n\in A$. Now let us estimate $|||x_i\pm e_n|||\ \forall i\in\{1,\ldots, n\}$. To this aim pick $i\in\{1,\ldots, n\}$. Then, on the one hand,
$$\vert\lim  (x_i\pm e_n)\vert=\vert\lim  (x_i)\vert$$
since $\lim e_n=0$.
On the other hand
$$\sup_k \vert x_i(k)\pm e_n(k)-\lim (x_i\pm e_n)\vert$$
$$=\max\left\{\sup_{k\neq n}\vert x_i(k)-\lim(x_i)\vert,\vert x_i(n)-\lim(x_i)\pm 1\vert \right\}$$
$$\leq \max\left\{\sup_{k\neq n}\vert x_i(k)-\lim(x_i)\vert,\vert x_i(n)-\lim(x_i)\vert+1\right\}$$
$$\leq\max\left\{\sup_n\vert x_i(n)-\lim(x_i)\vert,1+\varepsilon\right\}.$$
Consequently, by definition of the norm $|||\cdot|||$, one has $$|||x_i\pm e_n|||\leq \max\{|||x_i|||,1+\varepsilon\}=1+\varepsilon.$$
Moreover, check that
$$\vert \lim(e_n)\vert=0$$ and $$\sup_k\vert e_n(k)-\lim(e_k)\vert=1.$$
Thus $e_n\in S_{\ell_\infty}$. From \cite[Proposition 2.1]{all} we get that $(\ell_\infty,|||\cdot|||)$ is ASQ Banach space, as desired.
\end{proof}

\begin{remark}\label{observaclave}
From the above proof it follows that, given $x_1,\ldots, x_n\in S_{\ell_\infty}$ and $\varepsilon>0$ we can find $y\in S_{c_0}$ such that $|||x_i\pm y|||\leq 1+\varepsilon$. Roughly speaking we can say that the fact that $\ell_\infty$ under the norm of above Theorem is ASQ relies on the subspace $c_0$. This simple observation will be the key of the general renorming result.
\end{remark}

As we have pointed out above, the Banach space $\ell_\infty$ plays an important role as example of Banach space containing an isomorphic copy of $c_0$ which can be equivalently renormed to be ASQ. However, as dual Banach spaces containing an isomorphic copy of $c_0$ actually contain a complemented copy of $\ell_\infty$ \cite[Proposition 2.e.8]{litza}, we can deduce our general result from this particular example by giving a suitable renorming in the bidual space.

\begin{theorem}\label{renormageneral}

Let $X$ be a Banach space containing an isomorphic copy of $c_0$. Then there exists an equivalent norm on $X$ such that $X$ is an ASQ space under the new norm.

\end{theorem}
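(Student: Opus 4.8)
The plan is to bootstrap from Theorem~\ref{renormainfi} via the bidual. Since $X$ contains an isomorphic copy of $c_0$, the classical result \cite[Proposition 2.e.8]{litza} tells us that $X^*$ contains an isomorphic (indeed complemented) copy of $\ell_\infty$; taking duals once more, $X^{**}$ contains a complemented copy of $\ell_\infty^* \supseteq \ell_1$, but the point we actually want is subtler. The key observation in Remark~\ref{observaclave} is that the ASQ behaviour of the renormed $\ell_\infty$ is witnessed by vectors in $c_0$. So first I would fix an isomorphic embedding $j\colon c_0 \hookrightarrow X$; by the principle of local reflexivity or a direct Hahn--Banach argument, and using that $X$ is $w^*$-dense in $X^{**}$, I would want to produce a weak$^*$-continuous structure on $X^{**}$ in which $X$ sits together with a copy of $\ell_\infty$ realised as $c_0^{**}$ inside $X^{**}$ (noting $c_0^{**}=\ell_\infty$). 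More precisely, after the isomorphic embedding, $\overline{j(c_0)}^{\,w^*}$ in $X^{**}$ is an isomorphic copy of $\ell_\infty$, and this copy contains the original $c_0 \subseteq X$ as a subspace whose elements we can exploit exactly as in Remark~\ref{observaclave}.

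The second step is to transport the norm $|||\cdot|||$ of Theorem~\ref{renormainfi} to a norm on $X$. Concretely, I would define an equivalent norm on $X^{**}$ that agrees with the classical norm away from the copy of $\ell_\infty$ but is modified on that copy so that it becomes ASQ-like, then restrict to $X$ and check that the restriction is still equivalent. The cleanest route: write $X^{**} = \ell_\infty \oplus Z$ (a complemented decomposition coming from the complemented copy), put on the $\ell_\infty$ summand the norm $|||\cdot|||$ from Theorem~\ref{renormainfi}, keep the old norm on $Z$, and take an $\ell_\infty$-type combination of the two; the resulting norm $N$ on $X^{**}$ is equivalent to the original. Now restrict $N$ to $X$. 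The crucial point is that the ASQ-witnessing sequence $\{e_n\}$ (or rather the $y\in S_{c_0}$ from Remark~\ref{observaclave}) lives in $c_0 \subseteq j(c_0) \subseteq X$, so it is available \emph{inside} $X$, not merely in $X^{**}$. Given $x_1,\dots,x_k \in S_X$ (with respect to the new norm), decompose each as $a_i + z_i$ with $a_i$ in the $\ell_\infty$-part and $z_i$ in $Z$; apply the $\ell_\infty$-argument of Theorem~\ref{renormainfi} to the $a_i$'s to get $y\in S_{c_0}\subseteq X$ with $|||a_i \pm y||| \le 1+\varepsilon$, and because $y$ has zero $Z$-component the estimate $N(x_i \pm y) \le \max\{N(x_i), 1+\varepsilon\}$ goes through. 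Then invoke \cite[Proposition 2.1]{all} to conclude $X$ is ASQ.

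I expect the main obstacle to be the bookkeeping that makes $c_0 \subseteq X$ sit correctly inside the complemented $\ell_\infty$ in $X^{**}$ and simultaneously inside $X$, so that the ASQ-witnessing vectors are genuinely elements of $X$. One has to be careful that the complemented copy of $\ell_\infty$ in $X^{**}$ furnished by \cite[Proposition 2.e.8]{litza} can be chosen to contain (a copy of) the original $c_0\subseteq X$; this should follow because the embedding $c_0\hookrightarrow X$ induces $c_0=c_0^{**}\cap X \hookrightarrow X \hookrightarrow X^{**}$ and the weak$^*$-closure of $j(c_0)$ is the relevant $\ell_\infty$, but the details of the projection and of equivalence-of-norms constants require care. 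A secondary technical nuisance is verifying that restricting an equivalent norm from $X^{**}$ to $X$ stays equivalent (immediate) and that $S_X$ in the new norm is parametrised in a way compatible with the block decomposition $x = a + z$; this is routine once the decomposition is set up, but must be stated cleanly. Everything else — the norm estimates — is a direct rerun of the computation in the proof of Theorem~\ref{renormainfi}, now carried out inside the $\ell_\infty$-summand.
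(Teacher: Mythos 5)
Your proposal is essentially the paper's own proof: identify the weak$^*$-closure $\overline{j(c_0)}^{\,w^*}=j(c_0)^{\perp\perp}\cong c_0^{**}=\ell_\infty$ inside $X^{**}$, use injectivity of $\ell_\infty$ to write $X^{**}=\ell_\infty\oplus_\infty Z$ with the norm of Theorem~\ref{renormainfi} on the first summand, restrict to $X$, and exploit (Remark~\ref{observaclave}) that the ASQ-witnessing vectors lie in $S_{c_0}\subseteq X$ before invoking \cite[Proposition 2.1]{all}. The only blemish is the opening claim that \cite[Proposition 2.e.8]{litza} gives a copy of $\ell_\infty$ in $X^*$ (false, e.g.\ for $X=c_0$, where $X^*=\ell_1$); the correct application, which is the one you actually use (and the paper makes via \cite[Proposition 5.13]{rusos}), is that the dual space $X^{**}\supseteq c_0$ contains the complemented copy $j(c_0)^{\perp\perp}\cong\ell_\infty$.
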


\begin{proof}

Assume that $X$ contains a subspace $Y$ which is isometric to $c_0$.

As $Y^{**}\subseteq X^{**}$ is linearly isometric to $\ell_\infty$, then $Y^{**}$ is complemented in  $X^{**}$ \cite[Proposition 5.13]{rusos}.

Then we can consider on $X^{**}$ an equivalent norm so that
$$X^{**}=Y^{**}\oplus_\infty Z,$$
and such norm agrees with the original one of $Y^{**}$.

Now we can consider on $Y^{**}$ the norm defined in Theorem \ref{renormainfi},  so  $Y^{**}$ becomes into an ASQ space and agrees with the original norm on $Y\subseteq X$. This defines an equivalent norm on $X^{**}$ which we will denote by $\Vert\cdot\Vert$. Clearly $X^{**}$ is an ASQ space \cite[Proposition 5.7]{all}. Our aim is to prove that $X$ is an ASQ space following similar ideas to \cite[Proposition 5.7]{all} and Remark \ref{observaclave}.

To this aim pick $x_1,\ldots, x_n\in S_X$ and $\varepsilon>0$. Now $x_i\in X^{**}=Z\oplus_\infty Y^{**}$ for each $i\in\{1,\ldots, n\}$, so we can find $z_i\in Z$ and $y_i\in Y^{**}$ such that $x_i=(z_i,y_i)\ \forall i\in\{1,\ldots, n\}$. We can assume, making a perturbation argument if necessary, that $y_i\neq 0 \ \forall i\in\{1,\ldots, n\}$. From Remark \ref{observaclave} we can find $y\in S_{c_0}$ such that
\begin{equation}\label{teogeneaplicaASQ}
\left\Vert \frac{y_i}{\Vert y_i\Vert}\pm y \right\Vert\leq 1+\varepsilon.
\end{equation}
Define $z:=(0,y)\in S_{c_0}\subseteq X$. Then
$$\Vert x_i\pm z\Vert=\max\{\Vert z_i\Vert,\Vert y_i\pm y\Vert\}\leq \max\{1,\Vert y_i\pm y\Vert\}$$
$$=\left\{1,\left\Vert \Vert y_i\Vert\left(\frac{y_i}{\Vert y_i\Vert}\pm y\right)\pm(1-\Vert y_i\Vert)y \right\Vert  \right\}$$
$$\mathop{\leq}
\limits^{\mbox{(\ref{teogeneaplicaASQ})}} \max\left\{1,\Vert y_i\Vert(1+\varepsilon)+(1-\Vert y_i\Vert)\Vert y\Vert \right\}\leq 1+\varepsilon.$$
To sum up we have proved that given $x_1,\ldots, x_n\in S_X$ and $\varepsilon>0$ we can find $z\in S_X$ such that 
$$\Vert x_i\pm z\Vert\leq 1+\varepsilon.$$
Thus $X$ is an ASQ space under the new equivalent norm, so we are done.

\end{proof}

Above Theorem allows us to strengthen \cite[Proposition 4.7]{aln}, where it is proved that every Banach space containing an isomorphic copy of $c_0$ can be equivalently renormed to have the strong diameter two property.

Moreover, from Theorem \ref{renormageneral} and \cite[Theorem 2.4]{all} we get the following

\begin{corollary}\label{carac0asq}
Let $X$ be a Banach space. Then there exists an equivalent norm on $X$ such that $X$ is an ASQ Banach space under the new norm  if, and only if, $X$ contains an isomorphic copy of $c_0$.
\end{corollary}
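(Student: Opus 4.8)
The plan is to deduce Corollary \ref{carac0asq} by combining the renorming result of Theorem \ref{renormageneral} with the necessity direction which is essentially already in the literature. The statement is a biconditional, so I would split it into two implications and handle them separately.

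For the forward implication, suppose $X$ admits an equivalent norm under which it is ASQ. Since ASQ spaces are WASQ \cite[Theorem 2.8]{all} and in particular LASQ, and since every ASQ space contains an isomorphic copy of $c_0$ by \cite[Lemma 2.6]{all} (or via \cite[Theorem 2.4]{all} as referenced), we immediately conclude that $X$ contains an isomorphic copy of $c_0$; this property is plainly invariant under equivalent renormings, so it holds for $X$ under its original norm. This direction is essentially a citation and requires no real work.

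For the converse implication, suppose $X$ contains an isomorphic copy of $c_0$. Then Theorem \ref{renormageneral} applies verbatim and produces an equivalent norm on $X$ making it ASQ. There is nothing further to check, since Theorem \ref{renormageneral} is exactly this statement.

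I do not anticipate a genuine obstacle here: Corollary \ref{carac0asq} is a clean packaging of Theorem \ref{renormageneral} (sufficiency) together with the known fact that ASQ forces a copy of $c_0$ (necessity), the latter being stable under isomorphism. The only point to be mildly careful about is to cite the correct result for the necessity direction — \cite[Theorem 2.4]{all} is the reference indicated in the text just before the corollary — and to note explicitly that "containing an isomorphic copy of $c_0$" is an isomorphic invariant, so passing between the renormed space and the original $X$ causes no trouble. Thus the proof is simply: the "if" part is Theorem \ref{renormageneral}, and the "only if" part follows from \cite[Theorem 2.4]{all} since containment of a copy of $c_0$ is preserved under equivalent renormings.
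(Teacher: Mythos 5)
Your proposal is correct and matches the paper's own argument: the paper derives the corollary exactly by combining Theorem \ref{renormageneral} for the ``if'' direction with the cited result from \cite{all} that ASQ spaces contain a copy of $c_0$ for the ``only if'' direction, the latter being invariant under renorming.
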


In \cite{all} the relation between ASQ Banach spaces and the intersection property is pointed out. Recall that a Banach space $X$ has the \textit{intersection property} if for every $\varepsilon>0$ there exist  $x_1,\ldots, x_n\in X$ such that $\Vert x_i\Vert<1$ and such that if $y\in X$ verifies that $\Vert x_i-y\Vert\leq 1$ for every $i\in\{1,\ldots, n\}$ then $\Vert y\Vert\leq \varepsilon$. Given $0<\varepsilon<1$, $X$ is said to \textit{$\varepsilon$-fail the intersection property} if $\gamma(\varepsilon)=1$, where 
$$\gamma(\varepsilon):=\sup_{x_1,\ldots, x_n\in B_{[0,1)}}\inf_{y\in B_{(\varepsilon,1]}} \max_{1\leq i\leq n}\Vert x_i-y\Vert\ \forall\ 0<\varepsilon<1,$$
and $B_I:=\{x\in X\ /\ \Vert x\Vert\in I\}$ for each $I\subseteq \mathbb R^+$. Finally, a Banach space is said to \textit{fail the intersection property} if $X$ $\varepsilon$-fails the intersection property for some $0<\varepsilon<1$.

On the one hand, it is known that a Banach space $X$ is ASQ if, and only if, $X$ $\varepsilon$-fails the intersection property for every $0<\varepsilon<1$ \cite[Proposition 6.1]{all}. On the other hand, it is known that a Banach space admits an equivalent norm which fails the intersection property if, and only if, $X$ contains an isomorphic copy of $c_0$ \cite[Theorem 1.7]{hr}. Now we can improve  above Theorem as an straightforward application of Corollary \ref{carac0asq}.

\begin{theorem}\label{renormaip}

Let $X$ be a Banach space. Then $X$ admits an equivalent norm which $\varepsilon$-fails the intersection property for each $0<\varepsilon<1$ if, and only if, $X$ contains an isomorphic copy of $c_0$.

\end{theorem}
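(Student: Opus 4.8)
The plan is to combine Corollary~\ref{carac0asq} with the characterization of ASQ spaces in terms of the intersection property recorded just above the statement, namely \cite[Proposition 6.1]{all}, which says that a Banach space is ASQ if and only if it $\varepsilon$-fails the intersection property for every $0<\varepsilon<1$. Since both the ASQ property and the ``$\varepsilon$-fails the intersection property for every $\varepsilon$'' property are properties of a fixed norm, this equivalence transfers verbatim to the renorming level.

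First I would prove the direct implication. Suppose $X$ admits an equivalent norm $\vert\vert\vert\cdot\vert\vert\vert$ which $\varepsilon$-fails the intersection property for every $0<\varepsilon<1$. By \cite[Proposition 6.1]{all} applied to $(X,\vert\vert\vert\cdot\vert\vert\vert)$, this space is ASQ. Hence $X$ admits an equivalent ASQ norm, and Corollary~\ref{carac0asq} gives that $X$ contains an isomorphic copy of $c_0$.

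Conversely, assume $X$ contains an isomorphic copy of $c_0$. By Corollary~\ref{carac0asq} there is an equivalent norm under which $X$ is ASQ. Applying \cite[Proposition 6.1]{all} to that norm, $X$ $\varepsilon$-fails the intersection property for every $0<\varepsilon<1$ under it, which is exactly what is required.

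There is no genuine obstacle here: the content has already been extracted in Corollary~\ref{carac0asq}, and the remaining work is the purely formal observation that \cite[Proposition 6.1]{all} is a statement about a fixed norm and therefore composes cleanly with a renorming result. The only point worth stating explicitly in the write-up is that the quantifier ``for every $0<\varepsilon<1$'' matches on both sides, so that no weaker ``fail the intersection property'' hypothesis is needed or obtained.

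\begin{proof}
Assume first that $X$ admits an equivalent norm which $\varepsilon$-fails the intersection property for each $0<\varepsilon<1$. By \cite[Proposition 6.1]{all}, $X$ endowed with this norm is an ASQ Banach space, so $X$ admits an equivalent ASQ norm. By Corollary~\ref{carac0asq}, $X$ contains an isomorphic copy of $c_0$.

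Conversely, assume that $X$ contains an isomorphic copy of $c_0$. By Corollary~\ref{carac0asq}, $X$ admits an equivalent norm under which it is an ASQ Banach space. Applying \cite[Proposition 6.1]{all} to this norm, we conclude that $X$, with this norm, $\varepsilon$-fails the intersection property for every $0<\varepsilon<1$.
\end{proof}
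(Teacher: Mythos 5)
Your argument is correct and is exactly the paper's: the theorem is stated there as a straightforward consequence of Corollary~\ref{carac0asq} combined with \cite[Proposition 6.1]{all}, which is precisely the two-way composition you wrote out. Nothing further is needed.
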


\section{ASQ Banach spaces and symmetric tensor products}\label{sectensores}

One of the most important fact when one tries to prove that ASQ Banach spaces have the strong diameter two property is that in such spaces we have a lot of weakly-null sequences which are equivalent to the $c_0$ basis (see \cite[Lemma 2.6]{all}). So, in order to prove that the symmetric projective tensor products of an ASQ Banach space have the strong diameter two property, we shall begin by proving the following Lemma, which asserts that such sequences are still weakly-null when they are considered in the tensor space.

\begin{lemma}\label{sucedebinulatensosime}

Let $X$ be a Banach space and consider $\{y_n\}\subseteq S_X$ a sequence equivalent to the usual basis of $c_0$. Pick $N\in\mathbb N$. Then $\{e_n^N\}\rightarrow 0$ in the weak topology of $Y:=\widehat{\otimes}_{\pi,s,N} X$.

\end{lemma}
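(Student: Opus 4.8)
The plan is to use the duality $Y^{*}=\mathcal P(^{N}X,\mathbb R)$ recalled in the introduction, which turns the statement that $\{y_{n}^{N}\}\to 0$ weakly into a statement about how bounded homogeneous polynomials act on the sequence $\{y_{n}\}$, and then to invoke the classical weak sequential continuity of polynomials on $c_{0}$. (Throughout I write $y_{n}^{N}$ for the element denoted $e_{n}^{N}$ in the statement.)

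First I would unwind the definition of weak convergence. To say that $\{y_{n}^{N}\}$ converges weakly to $0$ in $Y$ is exactly to say that $\langle P,y_{n}^{N}\rangle\to 0$ for every $P\in Y^{*}$; and under the identification $Y^{*}=\mathcal P(^{N}X,\mathbb R)$, together with the defining action $P(x^{N})=P(x)$ and the linearity of $P$ as a functional on $Y$, one has $\langle P,y_{n}^{N}\rangle=P(y_{n})$. (The sequence is automatically norm bounded, since $\Vert y_{n}^{N}\Vert\le 1$ because $y_{n}\in S_{X}$, although this is not needed.) So the Lemma reduces to: $P(y_{n})\to 0$ for every bounded $N$-homogeneous polynomial $P$ on $X$.

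To prove this I would pass to the closed subspace $Z:=\overline{\mathrm{span}}\{y_{n}:n\in\mathbb N\}$ of $X$. Since $\{y_{n}\}$ is equivalent to the usual basis $\{e_{n}\}$ of $c_{0}$, the correspondence $e_{n}\mapsto y_{n}$ extends to an isomorphism $T\colon c_{0}\to Z$. Given a bounded $N$-homogeneous polynomial $P$ on $X$, the composition $Q:=P\circ T$ is a bounded $N$-homogeneous polynomial on $c_{0}$. Now invoke the classical fact that every bounded polynomial on $c_{0}$ is weakly sequentially continuous; since $\{e_{n}\}$ is weakly null in $c_{0}$, this yields $P(y_{n})=P(Te_{n})=Q(e_{n})\to Q(0)=0$. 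As $P$ was arbitrary, the proof is complete.

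The only genuinely non-formal ingredient is the weak sequential continuity of polynomials on $c_{0}$; the rest is bookkeeping, so I expect that to be the one step worth spelling out or referencing carefully. If one prefers a self-contained argument, the case $N=2$ follows from the observation that a bounded bilinear form on $c_{0}$ corresponds to a bounded operator $c_{0}\to\ell_{1}$, which is compact by Pitt's theorem, hence sends the weakly null sequence $\{e_{n}\}$ to a norm-null sequence in $\ell_{1}$; the general case follows by a similar compactness argument applied to the associated symmetric $N$-linear form, or by induction on $N$. Since this is standard, quoting the known result keeps the proof short.
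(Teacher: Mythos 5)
Your proposal is correct and follows essentially the same route as the paper: reduce the weak nullity of $\{y_n^N\}$ to showing $P(y_n)\to 0$ for every bounded $N$-homogeneous polynomial $P$, pass to the closed span of $\{y_n\}$, which is isomorphic to $c_0$, and invoke the weak sequential continuity of polynomials on $c_0$ (the polynomial Dunford--Pettis property), using that $\{e_n\}$ is weakly null there. The only cosmetic difference is that you transfer $P$ via the isomorphism $T\colon c_0\to Z$ while the paper restricts $P$ to the subspace directly; the substance is identical.
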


\begin{proof}
Pick $P$ a $N$-homogenous polynomial on $X$. Define $Y:=\overline{span}\{e_n:n\in\mathbb N\}$, which is a subspace of $X$ which is isomorphic to $c_0$. Consider $Q=P_{|Y}$, which is a polynomial in $Y$. In fact, if $P(x)=M(x,\ldots, x)$ for suitable $N$-lineal form $M$, then
$$Q(y)=M_{|Y^N}(y,\ldots, y)\ \forall y\in Y.$$
For each $n\in\mathbb N$ it follows that
$$P(e_n)=Q(e_n).$$
Moreover, as each polynomial on $Y$ is weakly sequentially continuous (because $Y$ is linearly isomorphic to $c_0$ and such space has the polynominal Dunford-Pettis property \cite{jpz}) we conclude that $\{Q(e_n)\}\rightarrow 0$, so we are done.
\end{proof}

Note that given a Banach space $X$ with the strong diameter two property we can find, in every convex combination of slices of its unit ball, elements whose norm is as close to 1 as desired (see \cite[Lemma 2.1]{blr3}). In order to prove the announced result we shall before verify that such property is satisfied by every symmetric projective tensor product of an ASQ Banach space.

\begin{lemma}\label{combiconvextensosim}

Let $X$ be an ASQ space and $N\in\mathbb N$. Consider $Y:=\widehat{\otimes}_{\pi,s,N} X$ and $C:=\sum_{i=1}^k\lambda_i S(B_Y,P_i,\alpha_i)$ a convex combination of slices of $B_Y$. Then, for each $\varepsilon>0$, we can find $f\in S_{X^*}$ and $y_i^N\in S_i$  such that
$$f(y_i)>1-\varepsilon\ \forall i\in\{1,\ldots, k\}.$$
\end{lemma}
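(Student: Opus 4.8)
The plan is to exploit the fact that each slice $S(B_Y, P_i, \alpha_i)$ of $B_Y = B_{\widehat{\otimes}_{\pi,s,N}X}$ contains an element of the form $x_i^N$ with $x_i \in S_X$ and $P_i(x_i) > 1 - \alpha_i$; this follows from the description of the norm on the symmetric tensor product (the unit ball is the closed convex hull of $\{x^N : x \in S_X\}$, so any bounded polynomial attains a value exceeding $1-\alpha_i$ on some $x_i^N$). So fix such $x_1,\ldots,x_k \in S_X$ witnessing the slices. Now the ASQ property of $X$ enters: applying \cite[Lemma 2.6]{all} (which the excerpt flags as the crucial tool) to $x_1,\ldots,x_k$, there is a weakly-null sequence $\{y_n\} \subseteq S_X$, equivalent to the $c_0$-basis, with $\|x_i \pm y_n\| \to 1$ for all $i$; more to the point, after passing to a tail one has $x_i + y_n$ of norm close to $1$, and the key feature of ASQ-type sequences is that $\|\sum$ of finitely many of the $y_n$'s$\|$ stays bounded by roughly $1$, so these behave like the $c_0$-basis inside $S_X$.

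The main step is then a perturbation/averaging argument in the tensor space. The idea is to replace each $x_i^N$ by $(x_i + t y_{n})^N$ for a suitable small $t>0$ and large $n$ (or a convex-combination-type expression), expanding via multilinearity. Because $\{e_n^N\} \to 0$ weakly in $Y$ by Lemma \ref{sucedebinulatensosime}, and more generally the mixed terms involving at least one factor of $y_n$ tend weakly to zero (one applies the same polynomial Dunford–Pettis argument, or uses weak continuity of the relevant polynomials on the $c_0$-copy $\overline{\mathrm{span}}\{y_n\}$), the element $(x_i + t y_n)^N$ still lies (after normalizing) in the slice $S(B_Y, P_i, \alpha_i)$ for $n$ large, since $P_i((x_i + ty_n)^N) = P_i(x_i + t y_n) \to P_i(x_i) + (\text{terms} \to 0)$ — here one needs $P_i$ restricted to the $c_0$-copy spanned by $x_i$ and the $y_n$'s to be weakly sequentially continuous, which again follows from \cite{jpz}. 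Simultaneously, because $\|x_i + t y_n\|$ is close to $1$ uniformly in $i$ for appropriate $t$, the normalized elements $y_i := (x_i + t y_n)/\|x_i + t y_n\|$ all lie close to a common direction; more precisely, choosing a norming functional and using that the $y_n$-perturbations point "the same way", one produces $f \in S_{X^*}$ with $f(y_i) > 1 - \varepsilon$ for all $i$ simultaneously. This is where having a single weakly-null $c_0$-like sequence $\{y_n\}$ working for all $x_1,\ldots,x_k$ at once is essential: it lets one steer every slice representative toward the same functional.

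The hard part will be organizing the normalizations and choice of parameters $t$ and $n$ so that three things hold at once: (a) each $y_i^N$ genuinely lands in $S_i$ (requires $P_i$-value control, i.e. the weak-continuity argument), (b) the common functional $f$ evaluates above $1-\varepsilon$ on all $y_i$, and (c) all quantities stay on the unit sphere. I expect (b) to be the real obstacle: one must show the perturbed points $x_i + t y_n$ can be simultaneously aligned, which likely uses that in an ASQ space the sequence $\{y_n\}$ can be taken so that $\|x_i + t y_n\| \to \max\{\|x_i\|, t\} = \max\{1,t\}$, combined with a Hahn–Banach functional norming a fixed large block of the $y_n$'s; the $c_0$-structure guarantees such a functional exists and nearly norms each $x_i + t y_n$. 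Once these ingredients are in place the conclusion is immediate.
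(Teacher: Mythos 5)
Your skeleton (slice representatives $x_i^N$ with $Q_i(x_i)>1-\alpha_i$, the ASQ sequence $\{y_n\}$ from \cite[Lemma 2.6]{all}, and weak convergence in the tensor product to keep slice membership) is the same as the paper's, and your step (a) is essentially sound: it is the same polynomial Dunford--Pettis argument that the paper channels through Lemma \ref{sucedebinulatensosime} together with \cite[Lemma 1.1]{fj}. But there is a genuine gap at exactly the point you flag as ``the real obstacle'' (b): you never construct the common functional $f$, and the route you sketch cannot work as stated. With a \emph{small} perturbation parameter $t$, no single $f\in S_{X^*}$ can satisfy $f\bigl((x_i+ty_n)/\Vert x_i+ty_n\Vert\bigr)>1-\varepsilon$ for all $i$: if $f$ norms $y_n$ (or a block of the $y_n$'s), then the ASQ estimate $\Vert x_i\pm y_n\Vert\leq 1+\varepsilon$ forces $\vert f(x_i)\vert\leq\varepsilon$, so $f(x_i+ty_n)\leq \varepsilon+t$ while $\Vert x_i+ty_n\Vert$ is close to $1$; hence such an $f$ does \emph{not} nearly norm $x_i+ty_n$ when $t$ is small, contrary to your claim. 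The only alternative would be a functional nearly norming all the (essentially arbitrary) points $x_i$ simultaneously, which is not available.

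The fix, and what the paper actually does, is to take the full perturbation $t=1$ and normalize by $1+\varepsilon$ rather than by the exact norm: first choose $\varepsilon_0>0$ so that $Q_i\bigl(x_i/(1+\varepsilon)\bigr)>1-\alpha_i$ for $0<\varepsilon<\varepsilon_0$ (homogeneity of $Q_i$); then, since $(x_i\pm y_n)^N\rightarrow x_i^N$ weakly in $Y$, for $n$ large the elements $(x_i\pm y_n)^N/(1+\varepsilon)$ lie in $S_i$ and $\Vert x_i\pm y_n\Vert\leq 1+\varepsilon$; finally pick $f\in S_{X^*}$ with $f(y_n)=1$ for that single $n$. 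The two-sided bound $\Vert x_i\pm y_n\Vert\leq 1+\varepsilon$ then gives $\vert f(x_i)\vert\leq\varepsilon$, whence $f\bigl((x_i+y_n)/(1+\varepsilon)\bigr)\geq(1-\varepsilon)/(1+\varepsilon)$ for every $i$ simultaneously --- the alignment comes precisely from the fact that the common summand $y_n$ carries all of the mass of $f$, not from $f$ nearly norming the individual $x_i+ty_n$. Without this $t=1$ choice and the norming-of-$y_n$ trick, your argument does not yield the lemma.
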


\begin{proof}
For each $i\in\{1,\ldots, k\}$ consider $x_i\in S_X$ such that $x^N_i\in S_i$, in other words, $Q_i(x_i)>1-\alpha_i$ for each $i\in\{1,\ldots, k\}$. Since $Q_i$ is an $N$-homogeneous polynomial for each $i\in\{1,\ldots, k\}$ we can find $\varepsilon_0>0$ such that
$$\label{condiepsitensime}
0<\varepsilon<\varepsilon_0\Rightarrow Q_i\left(\frac{x_i}{1+\varepsilon}\right)>1-\alpha_i\ \forall i\in\{1,\ldots, k\}.$$
Following \cite[Lemma 2.6]{all} we can find $\{y_n\}$ a weakly-null sequence in $S_X$ which is $1+\varepsilon$-isometric to the usual basis of $c_0$ such that
$$\Vert x_i\pm y_n\Vert\rightarrow 1\ \forall i\in\{1,\ldots, k\}.$$
Now, as $\{y_n^N\}\rightarrow 0$ weakly in $Y$ because of Lemma \ref{sucedebinulatensosime} we conclude that $\{(x_i\pm y_n)^N\}\rightarrow x_i$ in the weak topology of $Y$ for each $i\in\{1,\ldots, k\}$. In fact, given $i\in\{1,\ldots, k\}$, one has that $\{(x_i\pm y_n)^N\}\rightarrow x_i$ if, and only if, $\{(x_i\pm y_n-x_i)^N\}\rightarrow 0$ \cite[Lemma 1.1]{fj}.

From facts above we can find $n$ large enough to ensure
$$\frac{\Vert x_i\pm y_n\Vert}{1+\varepsilon}\leq 1\ \forall i\in\{1,\ldots, k\},$$
and
$$Q_i\left(\frac{x_i\pm y_n}{1+\varepsilon}\right)>1-\alpha_i\ \forall i\in\{1,\ldots, k\}.$$

Now, on the one hand, $\sum_{i=1}^k\lambda_i\frac{(x_i\pm y_n)^N}{1+\varepsilon}\in C$.
On the other hand, consider $f\in S_{X^*}$ such that $f(y_n)=1$. As $f(x_i\pm y_n)\leq 1+\varepsilon$ for each $i\in\{1,\ldots, k\}$ we conclude that
$$\vert f(x_i)\vert\leq \varepsilon\ \forall i\in\{1,\ldots, k\}.$$
Now, defining $y_i:=\frac{x_i+ y_n}{1+\varepsilon}$, one has
$$f(y_i)=\frac{f(x_i)+f(y_n)}{1+\varepsilon}>\frac{1-\varepsilon}{1+\varepsilon}\ \forall i\in\{1,\ldots, k\}.$$
As $0<\varepsilon<\varepsilon_0$ was arbitrary we get the desired result.
\end{proof}

Now we are ready to prove the main result of the section.

\begin{theorem}\label{ASQproyesimet}
Let $X$ be an ASQ space and $N\in\mathbb N$. Then $Y:=\widehat{\otimes}_{\pi,s,N} X$ has the strong diameter two property.
\end{theorem}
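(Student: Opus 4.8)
The plan is to show that every convex combination of slices of $B_Y$ has diameter two, which is the definition of the strong diameter two property. So I would fix a convex combination $C:=\sum_{i=1}^k\lambda_i S(B_Y,P_i,\alpha_i)$ and $\varepsilon>0$, and try to produce two elements of $C$ at distance at least $2-\varepsilon$. The natural strategy, mimicking the proof of Lemma \ref{combiconvextensosim}, is to pick for each $i$ an element $x_i\in S_X$ with $x_i^N$ deep inside $S_i$ (allowing for a $(1+\varepsilon)$-scaling by $N$-homogeneity), then invoke \cite[Lemma 2.6]{all} to get a weakly-null sequence $\{y_n\}\subseteq S_X$ that is $(1+\varepsilon)$-equivalent to the $c_0$-basis with $\Vert x_i\pm y_n\Vert\to 1$. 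The two candidate points are then the "plus" and "minus" convex combinations
$$u^{\pm}:=\sum_{i=1}^k\lambda_i\frac{(x_i\pm y_n)^N}{1+\varepsilon},$$
for $n$ chosen large.

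The first thing to verify is that $u^{+},u^{-}\in C$: this is exactly what was checked inside the proof of Lemma \ref{combiconvextensosim}, namely that for $n$ large one has $\Vert x_i\pm y_n\Vert/(1+\varepsilon)\le 1$ (so $(x_i\pm y_n)^N/(1+\varepsilon)\in B_Y$) and $Q_i\bigl((x_i\pm y_n)/(1+\varepsilon)\bigr)>1-\alpha_i$ (so it lies in the slice $S_i$), using weak convergence $(x_i\pm y_n)^N\to x_i$ in $Y$ together with $N$-homogeneity and the choice of $\varepsilon_0$. The second and decisive thing is the lower bound on $\Vert u^{+}-u^{-}\Vert$. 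Here I would use duality: by Lemma \ref{combiconvextensosim} itself (applied to a single convex combination, or just to the auxiliary structure) there is $f\in S_{X^*}$ with $f(y_n)=1$ and $|f(x_i)|\le\varepsilon$ for all $i$; equivalently one can pick $f$ from the proof of that lemma directly. The associated $N$-homogeneous polynomial $\widehat{f}(x):=f(x)^N$ is a norm-one element of $Y^*$, and $u^{+}-u^{-}=\sum_i\lambda_i\frac{(x_i+y_n)^N-(x_i-y_n)^N}{1+\varepsilon}$, so
$$\Vert u^{+}-u^{-}\Vert\ge \widehat{f}\bigl(u^{+}-u^{-}\bigr)=\frac{1}{1+\varepsilon}\sum_{i=1}^k\lambda_i\bigl(f(x_i+y_n)^N-f(x_i-y_n)^N\bigr).$$
Since $|f(x_i)|\le\varepsilon$ and $f(y_n)=1$, each term $f(x_i+y_n)^N\to 1$ and $f(x_i-y_n)^N\to(-1)^N$ as $\varepsilon\to0$; when $N$ is odd this already gives a value tending to $2$, and since $\sum\lambda_i=1$ we conclude $\Vert u^{+}-u^{-}\Vert\ge 2-\varepsilon'$ with $\varepsilon'\to 0$.

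The main obstacle is the parity of $N$: the naive pairing with $\widehat{f}$ only manifestly gives distance $2$ when $N$ is odd, because for even $N$ one has $f(x_i-y_n)^N\approx 1$ too, and the difference collapses. For even $N$ I would instead separate $u^{+}$ and $u^{-}$ using a polynomial that is genuinely sensitive to the sign of $y_n$: take $g\in S_{X^*}$ as above and consider the (non-homogeneous part cancels) bilinear-type functional, or more cleanly, use a polynomial of the form $x\mapsto g(x)^{N-1}h(x)$ for a second functional $h$ with $h(y_n)=1$ and $h(x_i)$ small and $h$ "independent enough" from $g$ on the relevant finite set — since $\{y_n\}$ spans a copy of $c_0$, one can choose coordinate functionals realising this. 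Alternatively, and perhaps most robustly, I would argue that the map $T:Y=\widehat{\otimes}_{\pi,s,N}X\to X$ induced by a norm-one projection (or rather use that $(x_i\pm y_n)^N - x_i^N$, after subtracting the common part, detects $\pm y_n$ linearly via \cite[Lemma 1.1]{fj}) reduces the estimate to the base space $X$, where $\Vert(x_i/\Vert\cdot\Vert+y)-(x_i/\Vert\cdot\Vert-y)\Vert\ge 2\Vert y\Vert - o(1)$ holds because $y\in S_{c_0}\subseteq S_X$. Carrying out whichever of these routes cleanly — handling even $N$ — is where the real work lies; everything else is a transcription of Lemma \ref{combiconvextensosim} and Lemma \ref{sucedebinulatensosime}.
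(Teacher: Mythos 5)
Your setup (the points $u^{\pm}=\sum_i\lambda_i\frac{(x_i\pm y_n)^N}{1+\varepsilon}$, their membership in $C$ via the computations of Lemma \ref{combiconvextensosim}, and the odd-$N$ estimate against $f^N$ with $f(y_n)=1$, $|f(x_i)|\le\varepsilon$) matches the paper. But the even-$N$ case, which you correctly identify as the crux, is left with a genuine gap, and the one concrete fix you propose does not work: if you take $P(z)=g(z)^{N-1}h(z)$ with \emph{both} $g(y_n)=1$, $|g(x_i)|\le\varepsilon$ and $h(y_n)=1$, $|h(x_i)|$ small, then $P(x_i+y_n)\approx 1^{N-1}\cdot 1=1$ while $P(x_i-y_n)\approx(-1)^{N-1}\cdot(-1)=(-1)^N=1$, so the difference collapses exactly as it did for $f^N$; no amount of ``independence'' of $h$ from $g$ helps, because what matters is that the sign-sensitivity to $y_n$ occurs an odd number of times in the product. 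What is actually needed is a second factor of the \emph{opposite} kind: a single functional $g\in S_{X^*}$ that is nearly norming at all the $x_i$ simultaneously ($g(x_i)>1-\varepsilon$ for every $i$) and hence, since $g(x_i\pm y_n)\le 1+\varepsilon$, satisfies $|g(y_n)|\le 2\varepsilon$. Producing such a $g$ is precisely the content of Lemma \ref{combiconvextensosim}, which in your proposal is used only to place $u^{\pm}$ in $C$. The paper then tests against $P(z)=f(z)^{N-1}g(z)$, splits $P(x_i+y)-P(x_i-y)$ into the term $g(x_i)\bigl(f(x_i+y)^{N-1}-f(x_i-y)^{N-1}\bigr)\ge(1-\varepsilon)\cdot 2(1-\varepsilon)^{N-1}$ and an error term controlled by $|g(y)|\le 2\varepsilon$, obtaining a lower bound tending to $2$.

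Your fallback suggestion of ``reducing to the base space'' does not close this gap either as stated. Made precise, the reduction is contraction with $f^{\otimes(N-1)}$, i.e.\ the norm-one linear map $\widehat{\otimes}_{\pi,s,N}X\to X$ sending $z^N\mapsto f(z)^{N-1}z$; but for even $N$ this map sends $u^{+}-u^{-}$ approximately to $\frac{2}{1+\varepsilon}\sum_i\lambda_i x_i$, \emph{not} to something of norm close to $2\Vert y_n\Vert$ (that only happens for odd $N$, where the simpler $f^N$ already suffices). So you would still need a lower bound on $\bigl\Vert\sum_i\lambda_i x_i\bigr\Vert$, which again is exactly what the functional $g$ from Lemma \ref{combiconvextensosim} provides; composing the contraction with $g$ reproduces the paper's polynomial $f^{N-1}g$. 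In short, the missing idea is the use of the lemma's simultaneously norming functional $g$ as the even-degree ``stabilizing'' factor, and without it your even-$N$ argument does not go through.
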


\begin{proof}

Pick $k\in\mathbb N$ and $S_i:=S(B_Y,Q_i,\alpha_i)$ slices of $B_Y$, where $Q_1,\ldots, Q_k$ are norm-one $N$-homogeneous polynomials on $X$ and pick $\lambda_1,\ldots, \lambda_k\in [0,1]$ such that $\sum_{i=1}^k\lambda_i=1$. Define $C:=\sum_{i=1}^k \lambda_i S_i$ a convex combination of slices. Our aim is to prove that $diam(C)=2$. To this aim pick $\varepsilon>0$. In view of above lemma we can find $x_i^N\in S_i$ for each $i\in\{1,\ldots, k\}$  and $g\in S_{X^*}$ such that
$$g(x_i)>1-\varepsilon\ \forall i\in\{1,\ldots, k\}.$$
Now, again from computations of above Lemma, we can find $y\in S_X$ such that
$$\frac{(x_i\pm y)^N}{1+\varepsilon}\in S_i\ \forall i\in\{1,\ldots, k\}.$$ Now, on the one hand, $\sum_{i=1}^k \lambda_i \frac{(x_i\pm y)^N}{1+\varepsilon}\in C.$
On the other hand, let us estimate
$$diam(C)\geq \left\Vert\sum_{i=1}^k \lambda_i  \frac{(x_i+ y)^N-(x_i-y)^N}{1+\varepsilon}\right\Vert.$$
To this aim, pick $f\in S_{X^*}$ such that $f(y)=1$. As $f(x_i\pm y)\leq 1+\varepsilon\ \forall i\in\{1,\ldots, k\}$ we conclude that 
$$\vert f(x_i)\vert \leq\varepsilon\ \forall i\in\{1,\ldots, k\}.$$
Now let us argue by cases:

\begin{itemize}
\item If $N$ is odd, define $P(z):=f(z)^N$ for each $z\in X$. Then
$$\left\Vert\sum_{i=1}^k \lambda_i  \frac{(x_i+ y)^N-(x_i-y)^N}{1+\varepsilon}\right\Vert\geq \sum_{i=1}^k\lambda_i \frac{P(x+y)-P(x-y)}{1+\varepsilon}$$
$$=\sum_{i=1}^k \lambda_i \frac{f(x_i+y)^N-f(x_i-y)^N}{1+\varepsilon}=\sum_{i=1}^k \lambda_i \frac{f(x_i+y)^N+f(y-x_i)^N}{1+\varepsilon}$$
$$\geq \frac{(1-\varepsilon)^N+(1-\varepsilon)^N}{1+\varepsilon}\sum_{i=1}^k\lambda_i
=2\frac{(1-\varepsilon)^N}{1+\varepsilon}.$$
\item If $N$ is even, define $P(z):=f(z)^{N-1}g(z)$ for each $z\in X$. Again, $\Vert P\Vert\leq 1$. Now we conclude
$$\left\Vert\sum_{i=1}^k \lambda_i \frac{(x_i+ y)^N-(x_i-y)^N}{1+\varepsilon}\right\Vert\geq
\sum_{i=1}^k\lambda_i \frac{P(x_i+y)-P(x_i-y)}{1+\varepsilon}$$
$$=\sum_{i=1}^k \lambda_i \frac{f(x_i+y)^{N-1}
g(x_i+y)-f(x_i-y)^{N-1}g(x_i-y)}{1+\varepsilon}$$
$$=\sum_{i=1}^k\lambda_i \frac{g(x_i)(f(x_i+y)^{N-1}-f(x_i-y)^{N-1})+g(y)(f(x_i+y)^{N-1}+f(x_i-y)^{N-1})}{1+\varepsilon}.$$
Now, as $N-1$ is odd, we can estimate $f(x_i+y)^{N-1}-f(x_i-y)^{N-1}$ by $2(1-\varepsilon)^{N-1}$ as in the above case for each $i\in\{1,\ldots, k\}$. On the other hand, check that $\vert g(y)\vert\leq 2\varepsilon$ as $g(x_i\pm y)\leq 1+\varepsilon$ and $g(x_i)>1-\varepsilon$  for each $i\in\{1,\ldots, k\}$. Bearing in mind that $g(x_i)>1-\varepsilon\ \forall i\in\{1,\ldots, k\}$ we conclude that
$$diam(W)\geq \frac{2(1-\varepsilon)^{N}-4
\varepsilon(1+\varepsilon)^{N-1}}{1+\varepsilon}.$$

\end{itemize}

In any case, as $0<\varepsilon<\varepsilon_0$ was arbitrary we conclude that $diam(C)=2$ as desired.
\end{proof}

\begin{remark}
Check that last above estimates are similar to the ones of \cite{ab}. Consequently, in \cite[Proposition 2.4]{ab} can be obtained the strong diameter two property under the same assumptions.

\end{remark}

Now, as an easy consequence of Theorems \ref{renormageneral} and \ref{ASQproyesimet}, we get the following

\begin{corollary}

Let $X$ be a Banach space which contains an isomorphic copy of $c_0$. Then there exists an equivalent norm on $X$ such that for each $N\in\mathbb N$ the projective symmetric tensor product $\widehat{\otimes}_{\pi,s,N}X$ has the strong diameter two property.

\end{corollary}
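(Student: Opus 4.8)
The plan is to obtain this as a formal consequence of the two main results of the paper, with essentially no new work. First I would invoke Theorem~\ref{renormageneral}: since $X$ contains an isomorphic copy of $c_0$, there is an equivalent norm $\Vert\cdot\Vert$ on $X$ under which $X$ becomes an ASQ space. I fix this norm once and for all for the rest of the argument.

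Next I would observe that the ASQ property is an intrinsic property of the normed space $(X,\Vert\cdot\Vert)$ and does not refer to any integer $N$. Consequently, for each $N\in\mathbb N$ I may form the symmetric projective $N$-tensor product $\widehat{\otimes}_{\pi,s,N}X$ with respect to this fixed norm, and apply Theorem~\ref{ASQproyesimet} to the ASQ space $(X,\Vert\cdot\Vert)$; this gives directly that $\widehat{\otimes}_{\pi,s,N}X$ has the strong diameter two property. Since $N$ was arbitrary and the renorming was chosen independently of $N$, the same single equivalent norm on $X$ works for all $N$ simultaneously, which is precisely what is claimed.

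The only point that deserves a word of care is that $\widehat{\otimes}_{\pi,s,N}X$ depends on the norm placed on $X$, so the statement has to be read as: there exists \emph{one} equivalent norm on $X$ such that each of the symmetric projective tensor powers, computed with respect to that norm, has the strong diameter two property. This is harmless here, because Theorem~\ref{renormageneral} delivers such a norm without any reference to $N$. I do not expect any genuine obstacle: all the substantial content is already contained in Theorems~\ref{renormageneral} and~\ref{ASQproyesimet}, and the corollary is just their combination.
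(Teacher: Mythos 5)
Your argument is correct and is exactly the paper's route: the corollary is stated there as an immediate consequence of Theorems \ref{renormageneral} and \ref{ASQproyesimet}, combined in precisely the way you describe. Your extra remark that the single renorming works simultaneously for all $N$ is a sensible clarification but involves no new mathematics.
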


According to \cite{blr4}, the norm on a Banach space $X$ is said to be \textit{octahedral} if for every $\varepsilon>0$ and every $Y\subseteq X$ finite-dimensional subspace there exists $x\in S_X$ such that 
$$\Vert y+\lambda x\Vert\geq (1-\varepsilon)(\Vert y\Vert+\vert\lambda\vert)\ \forall y\in Y,\forall \lambda\in\mathbb R.$$
It is known that a Banach space $X$ an octahedral norm if, and only if, $X^*$ has the weak-star strong diameter two property \cite[Theorem 2.1]{blr4}. Consequently, from Theorem \ref{ASQproyesimet} we conclude that given an ASQ Banach space $X$ then $\mathcal P(^N X)$ has an octahedral norm for each $N\in\mathbb N$. However, we can go further bearing in mind the results of \cite{blr3}.

\begin{corollary}

Let $X$ and $Y$ be Banach spaces. If $X$ is an ASQ space and $Y$ has an octahedral norm, then $\mathcal P(^N X,Y)$ has an octahedral norm for each $N\in\mathbb N$.

\end{corollary}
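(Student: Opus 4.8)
The plan is to verify octahedrality of $\mathcal{P}(^N X,Y)$ through the standard finite reformulation of octahedrality (cf. \cite{blr4}): a norm is octahedral if and only if, for every finite family $Q_1,\dots,Q_m$ of norm-one vectors and every $\varepsilon>0$, there is a norm-one vector $P$ with $\Vert Q_j+P\Vert\geq 2-\varepsilon$ for all $j$ (applying this to symmetric $\varepsilon$-nets of finite-dimensional subspaces returns the formal definition used in the text). I would work through the isometric linearisation $\mathcal{P}(^N X,Y)=\mathcal{L}(\widehat{\otimes}_{\pi,s,N}X,Y)$, writing $Z:=\widehat{\otimes}_{\pi,s,N}X$ and identifying each $Q_j$ with the norm-one operator $T_j\in\mathcal{L}(Z,Y)$ determined by $Q_j(x)=T_j(x^N)$. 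Thus the goal becomes: from norm-one $N$-homogeneous polynomials $Q_1,\dots,Q_m\colon X\to Y$ and $\varepsilon>0$, produce a norm-one $N$-homogeneous polynomial $P\colon X\to Y$ with $\Vert Q_j+P\Vert\geq 2-\varepsilon$ for all $j$.

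The construction combines two sources of almost-$\ell_1$ behaviour, one coming from the domain and one from the range. From the domain: choose $x_j\in S_X$ with $\Vert Q_j(x_j)\Vert>1-\varepsilon$, and then, exactly as in the proof of Lemma \ref{combiconvextensosim}, use \cite[Lemma 2.6]{all} to get a weakly-null sequence $\{y_n\}\subseteq S_X$ equivalent to the $c_0$-basis with $\Vert x_j\pm y_n\Vert\to 1$ for all $j$, together with $f_n\in S_{X^*}$ such that $f_n(y_n)=1$; then $f_n(x_j)+1=f_n(x_j+y_n)\leq\Vert x_j+y_n\Vert$ and $1-f_n(x_j)=f_n(y_n-x_j)\leq\Vert x_j-y_n\Vert$ force $f_n(x_j)\to 0$. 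From the range: since $Y$ is octahedral, pick $y_0\in S_Y$ with $\bigl\Vert Q_j(x_j)/\Vert Q_j(x_j)\Vert+y_0\bigr\Vert\geq 2-\varepsilon$ for all $j$, and fix $y_j^*\in S_{Y^*}$ almost norming these vectors, so that $\langle y_j^*,Q_j(x_j)\rangle\geq(1-\varepsilon)^2$ and $\langle y_j^*,y_0\rangle\geq 1-\varepsilon$.

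Now fix a large $n$ and set $P(x):=f_n(x)^N\,y_0$, so that $\Vert P\Vert=\Vert f_n\Vert^N=1$. Evaluating at $u_j:=\tfrac{x_j+y_n}{1+\varepsilon}\in B_X$ and using $N$-homogeneity, $\Vert Q_j+P\Vert\geq\Vert(Q_j+P)(u_j)\Vert\geq\langle y_j^*,(Q_j+P)(u_j)\rangle=(1+\varepsilon)^{-N}\bigl(\langle y_j^*,Q_j(x_j+y_n)\rangle+(1+f_n(x_j))^N\langle y_j^*,y_0\rangle\bigr)$. The $P$-term is controlled by $|f_n(x_j)|<\varepsilon$ and $\langle y_j^*,y_0\rangle\geq 1-\varepsilon$. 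For the $Q_j$-term the crucial transfer is: $y_n^N\to 0$ weakly in $Z$ by Lemma \ref{sucedebinulatensosime}, hence $(x_j+y_n)^N\to x_j^N$ weakly in $Z$ by \cite[Lemma 1.1]{fj}, and therefore $Q_j(x_j+y_n)=T_j((x_j+y_n)^N)\to T_j(x_j^N)=Q_j(x_j)$ weakly in $Y$, since $T_j$ is weak-to-weak continuous; so $\langle y_j^*,Q_j(x_j+y_n)\rangle\to\langle y_j^*,Q_j(x_j)\rangle\geq(1-\varepsilon)^2$. Hence for $n$ large $\langle y_j^*,(Q_j+P)(u_j)\rangle\geq 2-C_N\varepsilon$ for a constant $C_N$ depending only on $N$, and since $\varepsilon>0$ is arbitrary (the constant being harmless) this yields octahedrality of $\mathcal{P}(^N X,Y)$.

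I expect the main obstacle to be precisely the last transfer step: the perturbation $y_n$ is weakly small but \emph{not} norm-small, so $Q_j(u_j)$ need not be norm-close to $Q_j(x_j)$ and one cannot argue at the level of $X$ alone. This is handled by linearising $Q_j$ into a bounded (hence weak-to-weak continuous) operator on $\widehat{\otimes}_{\pi,s,N}X$ and invoking the weak nullity of the tensor powers $y_n^N$ recorded in Lemma \ref{sucedebinulatensosime}, which is the same mechanism behind Theorem \ref{ASQproyesimet}. Notice that no parity discussion of $N$ is required here, in contrast with Theorem \ref{ASQproyesimet}: octahedrality only demands $Q_j+P$ to be large at a single point, so the cancellation in $(x_j+y)^N-(x_j-y)^N$ that forced the even case there does not occur. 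Alternatively, one could hope to deduce the statement directly from the strong diameter two property of $\widehat{\otimes}_{\pi,s,N}X$ in Theorem \ref{ASQproyesimet} together with the stability results for operators and tensor products in \cite{blr3}.
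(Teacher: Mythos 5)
Your argument is correct, but it is a genuinely different proof from the one in the paper. The paper disposes of this corollary in one line: it identifies $\mathcal P(^N X,Y)$ isometrically with $L(\widehat{\otimes}_{\pi,s,N}X,Y)$ via \cite{flo} and then quotes Theorem \ref{ASQproyesimet} (the strong diameter two property of $\widehat{\otimes}_{\pi,s,N}X$) together with the external stability results \cite[Theorem 2.1]{blr4} and \cite[Theorem 2.5]{blr3} for octahedrality in spaces of operators --- exactly the ``alternative'' you mention in your last sentence. You instead give a direct, self-contained construction: you verify the finite reformulation of octahedrality (norm-one $P$ with $\Vert Q_j+P\Vert\geq 2-\varepsilon$ for finitely many $Q_j$) by taking $P(x)=f_n(x)^N y_0$, where $f_n$ norms the almost-orthogonal vectors $y_n$ produced by \cite[Lemma 2.6]{all} and $y_0$ comes from octahedrality of $Y$, and you transfer $Q_j(x_j+y_n)\to Q_j(x_j)$ weakly through the linearization exactly as in Lemma \ref{combiconvextensosim}, i.e.\ via Lemma \ref{sucedebinulatensosime} and \cite[Lemma 1.1]{fj}; the estimates check out, including the norming functionals $y_j^*$ splitting $\langle y_j^*,Q_j(x_j)\rangle\geq(1-\varepsilon)^2$ and $\langle y_j^*,y_0\rangle\geq 1-\varepsilon$, and your observation that no parity-of-$N$ case distinction is needed is accurate. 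What your route buys is independence from the quoted theorems of \cite{blr3} and \cite{blr4} and an explicit witness polynomial; what the paper's route buys is brevity. The only point you should make airtight is the two-point characterization of octahedrality you start from: it is not stated in this paper and you cite \cite{blr4} only loosely, so either give a precise reference or include the short symmetric-$\varepsilon$-net argument you allude to (which does work: from $\Vert x_i+y\Vert\geq 2-\delta$ on a symmetric $\delta$-net of $S_E$ one gets $\Vert x+\lambda y\Vert\geq(1-2\delta)(\Vert x\Vert+\vert\lambda\vert)$ for all $x\in E$, $\lambda\in\mathbb R$, by treating $0\leq\lambda\leq1$ and $\lambda\geq1$ separately and using $-x$ in the net for negative $\lambda$). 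Note also that for the range space you only need the easy direction (the definition of octahedrality immediately gives $\Vert Q_j(x_j)/\Vert Q_j(x_j)\Vert+y_0\Vert\geq 2-2\varepsilon$), so the nontrivial direction is used only for the conclusion about $\mathcal P(^N X,Y)$.
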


\begin{proof}
As $\mathcal P(^N X,Y)$ and $L(\widehat{\otimes}_{\pi,s,N} X,Y)$ are linearly isometric \cite{flo} for each $N\in\mathbb N$, the Corollary follows from Theorem \ref{ASQproyesimet}, \cite[Theorem 2.1]{blr4} and \cite[Theorem 2.5]{blr3}.
\end{proof}

\section{Some remarks and open questions}

In \cite{all} it is posed as an open question whether there exists a dual Banach space which is ASQ. In view of Theorem \ref{renormainfi} it seems natural take advantage of Theorem \ref{renormageneral} for dual Banach spaces. Unfortunately, the technique exposed in such result does not respect the duality of Banach spaces. Indeed, we have the following

\begin{proposition}

Let $|||\cdot|||$ be the ASQ norm on $\ell_\infty$.  Then $Ext(B_{\ell_\infty})=\emptyset$. As a consequence, $(\ell_\infty,|||\cdot|||)$ is not isometric to any dual Banach space.

\end{proposition}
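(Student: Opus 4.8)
The plan is to prove directly, working with the explicit formula for $|||\cdot|||$, that the unit ball $B_{\ell_\infty}$ has no extreme points, and then to deduce that $(\ell_\infty,|||\cdot|||)$ cannot be isometric to a dual space via the Krein--Milman theorem.

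For the first part, I would fix $x\in\ell_\infty$ with $|||x|||=1$ and put $a:=\lim(x)$, so that $\max\{|a|,\sup_n|x(n)-a|\}=1$. The key elementary observation is that, since $\mathcal U$ is non-principal, $\lim_{\mathcal U}(x(n)-a)=\lim(x)-a=0$, and therefore the set $\{n:|x(n)-a|<\tfrac12\}$ belongs to $\mathcal U$; in particular it is non-empty, so I may choose an index $m$ with $|x(m)-a|<1$. Set $t:=1-|x(m)-a|>0$ and $y:=t\,e_m\neq 0$. Since $\lim(e_m)=0$ one gets $\lim(x\pm y)=a$, and, as $x\pm y$ differs from $x$ only in the $m$-th coordinate,
\[
\sup_k|(x\pm y)(k)-a|=\max\Bigl\{\sup_{k\neq m}|x(k)-a|,\;|x(m)-a\pm t|\Bigr\}\le 1,
\]
because $\sup_{k\neq m}|x(k)-a|\le|||x|||=1$ and $|x(m)-a\pm t|\le|x(m)-a|+t=1$. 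Together with $|a|\le|||x|||=1$ this gives, by the very definition of $|||\cdot|||$, that $|||x\pm y|||\le 1$, so $x=\tfrac12(x+y)+\tfrac12(x-y)$ exhibits $x$ as a non-trivial midpoint of points of $B_{\ell_\infty}$. Hence no point of $S_{\ell_\infty}$ is extreme; since points with $|||x|||<1$ are clearly non-extreme as well (perturb by a small multiple of any non-zero vector, using the triangle inequality), one concludes $\operatorname{Ext}(B_{\ell_\infty})=\emptyset$.

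For the consequence, suppose $(\ell_\infty,|||\cdot|||)$ were linearly isometric to some dual Banach space $W^*$. Then $B_{W^*}$ is convex and weak$^*$-compact by the Banach--Alaoglu theorem, so by the Krein--Milman theorem it is the weak$^*$-closed convex hull of its extreme points; as $B_{W^*}$ is non-empty and not a singleton, it must have an extreme point, contradicting $\operatorname{Ext}(B_{\ell_\infty})=\emptyset$. I expect no real obstacle in this argument; the only delicate point is the extraction of the coordinate $m$ with $|x(m)-a|<1$, which is precisely where the non-principality of $\mathcal U$ enters and which reflects the heuristic of Remark \ref{observaclave} that the ASQ behaviour of this norm is carried by the subspace $c_0$.
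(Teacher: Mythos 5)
Your argument is correct, and it reaches the conclusion by a slightly different (and more unified) route than the paper. The paper splits into two cases according to whether $\vert\lim(x)\vert<1$ or $\vert\lim(x)\vert=1$: in the first case it perturbs by $\pm\varepsilon\mathbf{1}$ (the constant-one sequence), and only in the second does it perturb along a coordinate vector $e_n$ chosen so that $x(n)\geq\varepsilon$. You instead give a single argument valid for every $x\in S_{\ell_\infty}$: using the defining property of the ultrafilter limit you locate a coordinate $m$ with $\vert x(m)-a\vert<1$ (where $a=\lim(x)$) and perturb by $\pm t\,e_m$ with $t=1-\vert x(m)-a\vert$, checking directly from the formula for $|||\cdot|||$ that both perturbations stay in the ball. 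This avoids the case distinction entirely and, as you note, makes visible that the degeneracy of the ball is carried by the $c_0$-coordinates, in the spirit of Remark \ref{observaclave}. The Krein--Milman half of your argument is the same as the paper's. One small correction of attribution rather than of substance: the fact that $\{n:\vert x(n)-a\vert<\tfrac12\}\in\mathcal U$ holds for \emph{any} ultrafilter, by the very definition of $\lim_{\mathcal U}$; where non-principality of $\mathcal U$ is genuinely needed in your proof is in the later step $\lim(e_m)=0$ (a principal ultrafilter generated by $m$ would give $\lim(e_m)=1$), so you should relocate that remark accordingly.
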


\begin{proof}
Consider $x\in S_{\ell_\infty}$. Then we have the following considerations:

\begin{enumerate}
\item If $\vert \lim(x)\vert<1$ then there exists $\varepsilon>0$ such that $\vert \lim(x)\pm \varepsilon\vert<1$. Now consider
$$y:=x+\varepsilon{\bf{1}}\ \ \ z:=x-\varepsilon{\bf{1}}.$$
Then clearly $\vert \lim(y)\vert\leq 1 $ and $\vert\lim(z)\vert\leq 1$. On the other hand, given $n\in\mathbb N$ one has
$$\vert y(n)-\lim(y)\vert=\vert y_n+\varepsilon-\lim(x)-\varepsilon\vert=\vert x(n)-\lim(x)\vert\leq |||x|||\leq 1.$$
So $y\in B_{\ell_\infty}$. By a similar argument we have that $z\in B_{\ell_\infty}$. As $x=\frac{y+z}{2}$ we get that $x\notin Ext(B_{\ell_\infty})$ in this case.

\item If $\vert \lim(x)\vert=1$, we shall assume with no loss of generality that $\lim(x)=1$. Then $\sup_n \vert x(n)-1\vert\leq 1$ from where we conclude that $x(n)\geq 0\ \forall n\in\mathbb N$. Moreover as $\lim(x)=1$ then we can find $\varepsilon>0$ and $n\in\mathbb N$ such that $x(n)\geq \varepsilon$. We claim that
$$x\pm \varepsilon e_n\in B_{\ell_\infty}.$$
Indeed,
$$\lim(x\pm \varepsilon e_n)=\lim(x).$$
Moreover, given $k\in\mathbb N$, 
$$\vert x(k)\pm \varepsilon e_n(k)-\lim(x\pm \varepsilon e_n)\vert=\vert x(k) \pm \varepsilon\delta_{kn}-\lim(x)\vert$$
If $k\neq n$ clearly last quantity is less than or equal to $|||x|||\leq 1$. Moreover, if $k=n$ then we have
$$\vert x(n)\pm \varepsilon -\lim(x)\vert=\vert x(n)\pm \varepsilon-1\vert\leq \vert x(n)-1\vert+\varepsilon=1-x(n)+\varepsilon\leq 1. $$
Thus $x\pm\varepsilon e_n\in B_{\ell_\infty}$, so $x\notin Ext(B_{\ell_\infty})$.

\end{enumerate}

This proves that $Ext(B_{\ell_\infty})=\emptyset$. Now $\ell_\infty$ is not a dual Banach space as an easy consequence of Krein-Milman theorem.

\end{proof}

From above Proposition we get that we can not give a dual renorming of the bidual of a Banach space containing an isomorphic copy of $c_0$ using the ideas of Theorem \ref{renormageneral}. Indeed, given $X$ a Banach space containing an isomorphic copy f $c_0$ we have considered a renorming of $X^{**}$ such that
$$X^{**}=\ell_\infty\oplus_\infty Z,$$
where we consider on $\ell_\infty$ the renorming of Theorem \ref{renormainfi}. By above Proposition the unit ball of  $\ell_\infty$ does not have any extreme point. Consequently, so does the unit ball of $X^{**}$ and, again by Krein-Milman theorem, $X^{**}$ can not be isometric to any dual Banach space. So it remains open wheter there exists a dual Banach space which is ASQ. It is even open wether there exists a dual Banach space failing the intersection property \cite[Section 4]{bh}

It is also posed as an open question in \cite{all}  if there exists any LASQ Banach space which fails to be WASQ. In this direction recall that it is proved that every Banach space which contains an isomorphic copy of $c_0$ can be equivalently renormed to have the slice diameter two property and whose new unit ball contains non-empty relatively weakly open subset whose diameter is as small as desired \cite[Theorem 2.4]{blr1}. Similarly, it is proved that every Banach space which contains an isomorphic copy of $c_0$ can be equivalently renormed to have the diameter two property and whose new unit ball contains convex combinations of slices  whose diameter is as small as desired \cite[Theorem 2.5]{blr2}. Thus we can go further and pose the following:

\begin{pro}

Let $X$ be a Banach space containing an isomorphic copy of $c_0$.

\begin{enumerate}
\item Is there an equivalent norm on $X$ which is LASQ and fails to be WASQ (or even its new unit ball contains non-empty relatively weakly open subsets whose diameter is as small as desired)?

\item Is there an equivalent norm on $X$ which is WASQ and fails to be ASQ (or even its new unit ball contains convex combinations of slices whose diameter is as small as desired)?
\end{enumerate}

\end{pro}

Finally, bearing in mind the results exposed in Section \ref{sectensores}, we will pose the following

\begin{pro}

Let $X$ be an ASQ Banach space and pick $N\in\mathbb N$. Is $\widehat{\otimes}_{\pi,s,N}X$ an ASQ Banach space?

\end{pro}

\end{document}